\newtheorem{theorem}{Theorem}[section]
\newtheorem{proposition}[theorem]{Proposition}
\newtheorem{lemma}[theorem]{Lemma}
\newtheorem{definition}[theorem]{Definition}
\newtheorem{corollary}[theorem]{Corollary}
\newtheorem{remark}[theorem]{Remark}
\renewcommand{\theequation}{\thesection.\arabic{equation}}
\newenvironment{acknowledgement}{\smallskip{\sc Acknowledgement.}\rm}{\smallskip}
\numberwithin{equation}{section}
\newcounter{counterConstant}
\let\pdfoutput=\undefined\fi
\chardef\@x10\chardef\@xv60
\def\tcitime{
\def\@time{%
  \@minute\time\@hour\@minute\divide\@hour\@xv
  \ifnum\@hour<\@x 0\fi\the\@hour:%
  \multiply\@hour\@xv\advance\@minute-\@hour
  \ifnum\@minute<\@x 0\fi\the\@minute
  }}%
\def\x@hyperref#1#2#3{%
   \catcode`\~ = 12
   \catcode`\$ = 12
   \catcode`\_ = 12
   \catcode`\# = 12
   \catcode`\& = 12
   \y@hyperref{#1}{#2}{#3}%
}
\def\y@hyperref#1#2#3#4{%
   #2\ref{#4}#3
   \catcode`\~ = 13
   \catcode`\$ = 3
   \catcode`\_ = 8
   \catcode`\# = 6
   \catcode`\& = 4
}
\def\QCTOpt[#1]#2{%
  \def\QCTOptB{#1}
  \def\QCTOptA{#2}
}
\def\QCTNOpt#1{%
  \def\QCTOptA{#1}
  \let\QCTOptB\empty
}
\def\Qct{%
  \@ifnextchar[{%
    \QCTOpt}{\QCTNOpt}
}
\def\QCBOpt[#1]#2{%
  \def\QCBOptB{#1}%
  \def\QCBOptA{#2}%
}
\def\QCBNOpt#1{%
  \def\QCBOptA{#1}%
  \let\QCBOptB\empty
}
\def\Qcb{%
  \@ifnextchar[{%
    \QCBOpt}{\QCBNOpt}%
}
\def\PrepCapArgs{%
  \ifx\QCBOptA\empty
    \ifx\QCTOptA\empty
      {}%
    \else
      \ifx\QCTOptB\empty
        {\QCTOptA}%
      \else
        [\QCTOptB]{\QCTOptA}%
      \fi
    \fi
  \else
    \ifx\QCBOptA\empty
      {}%
    \else
      \ifx\QCBOptB\empty
        {\QCBOptA}%
      \else
        [\QCBOptB]{\QCBOptA}%
      \fi
    \fi
  \fi
}
\def\GRAPHICSPS#1{%
 \ifcase\GRAPHICSTYPE
   \special{ps: #1}%
 \or
   \special{language "PS", include "#1"}%
 \fi
}%
\def\graffile#1#2#3#4{%
    \bgroup
	   \@inlabelfalse
       \leavevmode
       \@ifundefined{bbl@deactivate}{\def~{\string~}}{\activesoff}%
        \raise -#4 \BOXTHEFRAME{%
           \hbox to #2{\raise #3\hbox to #2{\null #1\hfil}}}%
    \egroup
}%
\def\draftbox#1#2#3#4{%
 \leavevmode\raise -#4 \hbox{%
  \frame{\rlap{\protect\tiny #1}\hbox to #2%
   {\vrule height#3 width\z@ depth\z@\hfil}%
  }%
 }%
}%
\let\nographics=\@msidraft
\newif\ifwasdraft
\def\GRAPHIC#1#2#3#4#5{%
   \ifnum\@msidraft=\@ne\draftbox{#2}{#3}{#4}{#5}%
   \else\graffile{#1}{#3}{#4}{#5}%
   \fi
}
\def\addtoLaTeXparams#1{%
    \edef\LaTeXparams{\LaTeXparams #1}}%
\newif\ifBoxFrame \BoxFramefalse
\newif\ifOverFrame \OverFramefalse
\newif\ifUnderFrame \UnderFramefalse
\def\BOXTHEFRAME#1{%
   \hbox{%
      \ifBoxFrame
         \frame{#1}%
      \else
         {#1}%
      \fi
   }%
}
\def\doFRAMEparams#1{\BoxFramefalse\OverFramefalse\UnderFramefalse\readFRAMEparams#1\end}%
\def\readFRAMEparams#1{%
 \ifx#1\end%
  \let\next=\relax
  \else
  \ifx#1i\dispkind=\z@\fi
  \ifx#1d\dispkind=\@ne\fi
  \ifx#1f\dispkind=\tw@\fi
  \ifx#1t\addtoLaTeXparams{t}\fi
  \ifx#1b\addtoLaTeXparams{b}\fi
  \ifx#1p\addtoLaTeXparams{p}\fi
  \ifx#1h\addtoLaTeXparams{h}\fi
  \ifx#1X\BoxFrametrue\fi
  \ifx#1O\OverFrametrue\fi
  \ifx#1U\UnderFrametrue\fi
  \ifx#1w
    \ifnum\@msidraft=1\wasdrafttrue\else\wasdraftfalse\fi
    \@msidraft=\@ne
  \fi
  \let\next=\readFRAMEparams
  \fi
 \next
 }%
\def\IFRAME#1#2#3#4#5#6{%
      \bgroup
      \let\QCTOptA\empty
      \let\QCTOptB\empty
      \let\QCBOptA\empty
      \let\QCBOptB\empty
      #6%
      \parindent=0pt
      \leftskip=0pt
      \rightskip=0pt
      \setbox0=\hbox{\QCBOptA}%
      \@tempdima=#1\relax
      \ifOverFrame
          \typeout{This is not implemented yet}%
          \show\HELP
      \else
         \ifdim\wd0>\@tempdima
            \advance\@tempdima by \@tempdima
            \ifdim\wd0 >\@tempdima
               \setbox1 =\vbox{%
                  \unskip\hbox to \@tempdima{\hfill\GRAPHIC{#5}{#4}{#1}{#2}{#3}\hfill}%
                  \unskip\hbox to \@tempdima{\parbox[b]{\@tempdima}{\QCBOptA}}%
               }%
               \wd1=\@tempdima
            \else
               \textwidth=\wd0
               \setbox1 =\vbox{%
                 \noindent\hbox to \wd0{\hfill\GRAPHIC{#5}{#4}{#1}{#2}{#3}\hfill}\\%
                 \noindent\hbox{\QCBOptA}%
               }%
               \wd1=\wd0
            \fi
         \else
            \ifdim\wd0>0pt
              \hsize=\@tempdima
              \setbox1=\vbox{%
                \unskip\GRAPHIC{#5}{#4}{#1}{#2}{0pt}%
                \break
                \unskip\hbox to \@tempdima{\hfill \QCBOptA\hfill}%
              }%
              \wd1=\@tempdima
           \else
              \hsize=\@tempdima
              \setbox1=\vbox{%
                \unskip\GRAPHIC{#5}{#4}{#1}{#2}{0pt}%
              }%
              \wd1=\@tempdima
           \fi
         \fi
         \@tempdimb=\ht1
         \advance\@tempdimb by -#2
         \advance\@tempdimb by #3
         \leavevmode
         \raise -\@tempdimb \hbox{\box1}%
      \fi
      \egroup%
}%
\def\DFRAME#1#2#3#4#5{%
  \vspace\topsep
  \hfil\break
  \bgroup
     \leftskip\@flushglue
	 \rightskip\@flushglue
	 \parindent\z@
	 \parfillskip\z@skip
     \let\QCTOptA\empty
     \let\QCTOptB\empty
     \let\QCBOptA\empty
     \let\QCBOptB\empty
	 \vbox\bgroup
        \ifOverFrame 
           #5\QCTOptA\par
        \fi
        \GRAPHIC{#4}{#3}{#1}{#2}{\z@}%
        \ifUnderFrame 
           \break#5\QCBOptA
        \fi
	 \egroup
  \egroup
  \vspace\topsep
  \break
}%
\def\FFRAME#1#2#3#4#5#6#7{%
  \@ifundefined{floatstyle}
    {
     \begin{figure}[#1]%
    }
    {
	 \ifx#1h
      \begin{figure}[H]%
	 \else
      \begin{figure}[#1]%
	 \fi
	}
  \let\QCTOptA\empty
  \let\QCTOptB\empty
  \let\QCBOptA\empty
  \let\QCBOptB\empty
  \ifOverFrame
    #4
    \ifx\QCTOptA\empty
    \else
      \ifx\QCTOptB\empty
        \caption{\QCTOptA}%
      \else
        \caption[\QCTOptB]{\QCTOptA}%
      \fi
    \fi
    \ifUnderFrame\else
      \label{#5}%
    \fi
  \else
    \UnderFrametrue%
  \fi
  \begin{center}\GRAPHIC{#7}{#6}{#2}{#3}{\z@}\end{center}%
  \ifUnderFrame
    #4
    \ifx\QCBOptA\empty
      \caption{}%
    \else
      \ifx\QCBOptB\empty
        \caption{\QCBOptA}%
      \else
        \caption[\QCBOptB]{\QCBOptA}%
      \fi
    \fi
    \label{#5}%
  \fi
  \end{figure}%
 }%
\def\makeactives{
  \catcode`\"=\active
  \catcode`\;=\active
  \catcode`\:=\active
  \catcode`\'=\active
  \catcode`\~=\active
}
   \gdef\activesoff{%
      \def"{\string"}%
      \def;{\string;}%
      \def:{\string:}%
      \def'{\string'}%
      \def~{\string~}%
    }
\def\FRAME#1#2#3#4#5#6#7#8{%
 \bgroup
 \ifnum\@msidraft=\@ne
   \wasdrafttrue
 \else
   \wasdraftfalse%
 \fi
 \def\LaTeXparams{}%
 \dispkind=\z@
 \def\LaTeXparams{}%
 \doFRAMEparams{#1}%
 \ifnum\dispkind=\z@\IFRAME{#2}{#3}{#4}{#7}{#8}{#5}\else
  \ifnum\dispkind=\@ne\DFRAME{#2}{#3}{#7}{#8}{#5}\else
   \ifnum\dispkind=\tw@
    \edef\@tempa{\noexpand\FFRAME{\LaTeXparams}}%
    \@tempa{#2}{#3}{#5}{#6}{#7}{#8}%
    \fi
   \fi
  \fi
  \ifwasdraft\@msidraft=1\else\@msidraft=0\fi{}%
  \egroup
 }%
\def\TEXUX#1{"texux"}
\long\def\QQQ#1#2{%
     \long\expandafter\def\csname#1\endcsname{#2}}%
\long\def\QQA#1#2{}%
\def\QTR#1#2{{\csname#1\endcsname {#2}}}%
\def\EXPAND#1[#2]#3{}%
\def\NOEXPAND#1[#2]#3{}%
\def\LaTeXparent#1{}%
\def\ChildStyles#1{}%
\def\ChildDefaults#1{}%
\def\QTagDef#1#2#3{}%
  \providecommand{\UNICODE}[2][]{\protect\rule{.1in}{.1in}}
  \providecommand{\U}[1]{\protect\rule{.1in}{.1in}}
\def\QQfnmark#1{\footnotemark}
 \def\abstract{%
  \if@twocolumn
   \section*{Abstract (Not appropriate in this style!)}%
   \else \small 
   \begin{center}{\bf Abstract\vspace{-.5em}\vspace{\z@}}\end{center}%
   \quotation 
   \fi
  }%
   \def\registered{\relax\ifmmode{}\r@gistered
                    \else$\m@th\r@gistered$\fi}%
 \def\r@gistered{^{\ooalign
  {\hfil\raise.07ex\hbox{$\scriptstyle\rm\text{R}$}\hfil\crcr
  \mathhexbox20D}}}}{}%
\newdimen\theight
\def\newfmtname{LaTeX2e}
  \DeclareOldFontCommand{\rm}{\normalfont\rmfamily}{\mathrm}
  \DeclareOldFontCommand{\sf}{\normalfont\sffamily}{\mathsf}
  \DeclareOldFontCommand{\tt}{\normalfont\ttfamily}{\mathtt}
  \DeclareOldFontCommand{\bf}{\normalfont\bfseries}{\mathbf}
  \DeclareOldFontCommand{\it}{\normalfont\itshape}{\mathit}
  \DeclareOldFontCommand{\sl}{\normalfont\slshape}{\@nomath\sl}
  \DeclareOldFontCommand{\sc}{\normalfont\scshape}{\@nomath\sc}
\def\alpha{{\Greekmath 010B}}%
\def\beta{{\Greekmath 010C}}%
\def\gamma{{\Greekmath 010D}}%
\def\delta{{\Greekmath 010E}}%
\def\epsilon{{\Greekmath 010F}}%
\def\zeta{{\Greekmath 0110}}%
\def\eta{{\Greekmath 0111}}%
\def\theta{{\Greekmath 0112}}%
\def\iota{{\Greekmath 0113}}%
\def\kappa{{\Greekmath 0114}}%
\def\lambda{{\Greekmath 0115}}%
\def\mu{{\Greekmath 0116}}%
\def\nu{{\Greekmath 0117}}%
\def\xi{{\Greekmath 0118}}%
\def\pi{{\Greekmath 0119}}%
\def\rho{{\Greekmath 011A}}%
\def\sigma{{\Greekmath 011B}}%
\def\tau{{\Greekmath 011C}}%
\def\upsilon{{\Greekmath 011D}}%
\def\phi{{\Greekmath 011E}}%
\def\chi{{\Greekmath 011F}}%
\def\psi{{\Greekmath 0120}}%
\def\omega{{\Greekmath 0121}}%
\def\varepsilon{{\Greekmath 0122}}%
\def\vartheta{{\Greekmath 0123}}%
\def\varpi{{\Greekmath 0124}}%
\def\varrho{{\Greekmath 0125}}%
\def\varsigma{{\Greekmath 0126}}%
\def\varphi{{\Greekmath 0127}}%
\def\nabla{{\Greekmath 0272}}
\def\FindBoldGroup{%
   {\setbox0=\hbox{$\mathbf{x\global\edef\theboldgroup{\the\mathgroup}}$}}%
}
\def\Greekmath#1#2#3#4{%
    \if@compatibility
        \ifnum\mathgroup=\symbold
           \mathchoice{\mbox{\boldmath$\displaystyle\mathchar"#1#2#3#4$}}%
                      {\mbox{\boldmath$\textstyle\mathchar"#1#2#3#4$}}%
                      {\mbox{\boldmath$\scriptstyle\mathchar"#1#2#3#4$}}%
                      {\mbox{\boldmath$\scriptscriptstyle\mathchar"#1#2#3#4$}}%
        \else
           \mathchar"#1#2#3#4%
        \fi 
    \else 
        \FindBoldGroup
        \ifnum\mathgroup=\theboldgroup 
           \mathchoice{\mbox{\boldmath$\displaystyle\mathchar"#1#2#3#4$}}%
                      {\mbox{\boldmath$\textstyle\mathchar"#1#2#3#4$}}%
                      {\mbox{\boldmath$\scriptstyle\mathchar"#1#2#3#4$}}%
                      {\mbox{\boldmath$\scriptscriptstyle\mathchar"#1#2#3#4$}}%
        \else
           \mathchar"#1#2#3#4%
        \fi     	    
	  \fi}
\newif\ifGreekBold  \GreekBoldfalse
\let\SAVEPBF=\pbf
\def\pbf{\GreekBoldtrue\SAVEPBF}%
  \newcounter{equationnumber}  
  \def\mathletters{%
     \addtocounter{equation}{1}
     \edef\@currentlabel{\theequation}%
     \setcounter{equationnumber}{\c@equation}
     \setcounter{equation}{0}%
     \edef\theequation{\@currentlabel\noexpand\alph{equation}}%
  }
    \def\BibTeX{{\rm B\kern-.05em{\sc i\kern-.025em b}\kern-.08em
                 T\kern-.1667em\lower.7ex\hbox{E}\kern-.125emX}}}{}%
\def\AmS{{\protect\usefont{OMS}{cmsy}{m}{n}%
                A\kern-.1667em\lower.5ex\hbox{M}\kern-.125emS}}}{}%
\def\@@eqncr{\let\@tempa\relax
    \ifcase\@eqcnt \def\@tempa{& & &}\or \def\@tempa{& &}%
      \else \def\@tempa{&}\fi
     \@tempa
     \if@eqnsw
        \iftag@
           \@taggnum
        \else
           \@eqnnum\stepcounter{equation}%
        \fi
     \fi
     \global\tag@false
     \global\@eqnswtrue
     \global\@eqcnt\z@\cr}
\def\TCItag{\@ifnextchar*{\@TCItagstar}{\@TCItag}}
\def\@TCItag#1{%
    \global\tag@true
    \global\def\@taggnum{(#1)}%
    \global\def\@currentlabel{#1}}
\def\@TCItagstar*#1{%
    \global\tag@true
    \global\def\@taggnum{#1}%
    \global\def\@currentlabel{#1}}
\def\ExitTCILatex{\makeatother }
\if@compatibility\message{amsmath already loaded}\fi\aftergroup\ExitTCILatex}
\if@compatibility\message{amstex already loaded}\fi\aftergroup\ExitTCILatex}
\if@compatibility\message{amsgen already loaded}\fi\aftergroup\ExitTCILatex}
\let\DOTSI\relax
\def\RIfM@{\relax\ifmmode}%
\def\FN@{\futurelet\next}%
\def\iint{\DOTSI\intno@\tw@\FN@\ints@}%
\def\iiint{\DOTSI\intno@\thr@@\FN@\ints@}%
\def\iiiint{\DOTSI\intno@4 \FN@\ints@}%
\def\idotsint{\DOTSI\intno@\z@\FN@\ints@}%
\def\ints@{\findlimits@\ints@@}%
\newif\iflimtoken@
\newif\iflimits@
\def\findlimits@{\limtoken@true\ifx\next\limits\limits@true
 \else\ifx\next\nolimits\limits@false\else
 \limtoken@false\ifx\ilimits@\nolimits\limits@false\else
 \ifinner\limits@false\else\limits@true\fi\fi\fi\fi}%
\def\multint@{\int\ifnum\intno@=\z@\intdots@                          
 \else\intkern@\fi                                                    
 \ifnum\intno@>\tw@\int\intkern@\fi                                   
 \ifnum\intno@>\thr@@\int\intkern@\fi                                 
 \int}
\def\multintlimits@{\intop\ifnum\intno@=\z@\intdots@\else\intkern@\fi
 \ifnum\intno@>\tw@\intop\intkern@\fi
 \ifnum\intno@>\thr@@\intop\intkern@\fi\intop}%
\def\intic@{%
    \mathchoice{\hskip.5em}{\hskip.4em}{\hskip.4em}{\hskip.4em}}%
\def\negintic@{\mathchoice
 {\hskip-.5em}{\hskip-.4em}{\hskip-.4em}{\hskip-.4em}}%
\def\ints@@{\iflimtoken@                                              
 \def\ints@@@{\iflimits@\negintic@
   \mathop{\intic@\multintlimits@}\limits                             
  \else\multint@\nolimits\fi                                          
  \eat@}
 \else                                                                
 \def\ints@@@{\iflimits@\negintic@
  \mathop{\intic@\multintlimits@}\limits\else
  \multint@\nolimits\fi}\fi\ints@@@}%
\def\intkern@{\mathchoice{\!\!\!}{\!\!}{\!\!}{\!\!}}%
\def\plaincdots@{\mathinner{\cdotp\cdotp\cdotp}}%
\def\intdots@{\mathchoice{\plaincdots@}%
 {{\cdotp}\mkern1.5mu{\cdotp}\mkern1.5mu{\cdotp}}%
 {{\cdotp}\mkern1mu{\cdotp}\mkern1mu{\cdotp}}%
 {{\cdotp}\mkern1mu{\cdotp}\mkern1mu{\cdotp}}}%
\def\RIfM@{\relax\protect\ifmmode}
\def\text{\RIfM@\expandafter\text@\else\expandafter\mbox\fi}
\let\nfss@text\text
\def\text@#1{\mathchoice
   {\textdef@\displaystyle\f@size{#1}}%
   {\textdef@\textstyle\tf@size{\firstchoice@false #1}}%
   {\textdef@\textstyle\sf@size{\firstchoice@false #1}}%
   {\textdef@\textstyle \ssf@size{\firstchoice@false #1}}%
   \glb@settings}
\def\textdef@#1#2#3{\hbox{{%
                    \everymath{#1}%
                    \let\f@size#2\selectfont
                    #3}}}
\newif\iffirstchoice@
\def\Let@{\relax\iffalse{\fi\let\\=\cr\iffalse}\fi}%
\def\vspace@{\def\vspace##1{\crcr\noalign{\vskip##1\relax}}}%
\def\multilimits@{\bgroup\vspace@\Let@
 \baselineskip\fontdimen10 \scriptfont\tw@
 \advance\baselineskip\fontdimen12 \scriptfont\tw@
 \lineskip\thr@@\fontdimen8 \scriptfont\thr@@
 \lineskiplimit\lineskip
 \vbox\bgroup\ialign\bgroup\hfil$\m@th\scriptstyle{##}$\hfil\crcr}%
\def\Sb{_\multilimits@}%
\def\endSb{\crcr\egroup\egroup\egroup}%
\def\Sp{^\multilimits@}%
\newdimen\ex@
\def\rightarrowfill@#1{$#1\m@th\mathord-\mkern-6mu\cleaders
 \hbox{$#1\mkern-2mu\mathord-\mkern-2mu$}\hfill
 \mkern-6mu\mathord\rightarrow$}%
\def\leftarrowfill@#1{$#1\m@th\mathord\leftarrow\mkern-6mu\cleaders
 \hbox{$#1\mkern-2mu\mathord-\mkern-2mu$}\hfill\mkern-6mu\mathord-$}%
\def\leftrightarrowfill@#1{$#1\m@th\mathord\leftarrow
\mkern-6mu\cleaders
 \hbox{$#1\mkern-2mu\mathord-\mkern-2mu$}\hfill
 \mkern-6mu\mathord\rightarrow$}%
\def\overrightarrow{\mathpalette\overrightarrow@}%
\def\overrightarrow@#1#2{\vbox{\ialign{##\crcr\rightarrowfill@#1\crcr
 \noalign{\kern-\ex@\nointerlineskip}$\m@th\hfil#1#2\hfil$\crcr}}}%
\def\overleftarrow{\mathpalette\overleftarrow@}%
\def\overleftarrow@#1#2{\vbox{\ialign{##\crcr\leftarrowfill@#1\crcr
 \noalign{\kern-\ex@\nointerlineskip}$\m@th\hfil#1#2\hfil$\crcr}}}%
\def\overleftrightarrow{\mathpalette\overleftrightarrow@}%
\def\overleftrightarrow@#1#2{\vbox{\ialign{##\crcr
   \leftrightarrowfill@#1\crcr
 \noalign{\kern-\ex@\nointerlineskip}$\m@th\hfil#1#2\hfil$\crcr}}}%
\def\underrightarrow{\mathpalette\underrightarrow@}%
\def\underrightarrow@#1#2{\vtop{\ialign{##\crcr$\m@th\hfil#1#2\hfil
  $\crcr\noalign{\nointerlineskip}\rightarrowfill@#1\crcr}}}%
\def\underleftarrow{\mathpalette\underleftarrow@}%
\def\underleftarrow@#1#2{\vtop{\ialign{##\crcr$\m@th\hfil#1#2\hfil
  $\crcr\noalign{\nointerlineskip}\leftarrowfill@#1\crcr}}}%
\def\underleftrightarrow{\mathpalette\underleftrightarrow@}%
\def\underleftrightarrow@#1#2{\vtop{\ialign{##\crcr$\m@th
  \hfil#1#2\hfil$\crcr
 \noalign{\nointerlineskip}\leftrightarrowfill@#1\crcr}}}%
\def\qopnamewl@#1{\mathop{\operator@font#1}\nlimits@}
\let\nlimits@\displaylimits
\def\setboxz@h{\setbox\z@\hbox}
\def\varlim@#1#2{\mathop{\vtop{\ialign{##\crcr
 \hfil$#1\m@th\operator@font lim$\hfil\crcr
 \noalign{\nointerlineskip}#2#1\crcr
 \noalign{\nointerlineskip\kern-\ex@}\crcr}}}}
 \def\rightarrowfill@#1{\m@th\setboxz@h{$#1-$}\ht\z@\z@
  $#1\copy\z@\mkern-6mu\cleaders
  \hbox{$#1\mkern-2mu\box\z@\mkern-2mu$}\hfill
  \mkern-6mu\mathord\rightarrow$}
\def\leftarrowfill@#1{\m@th\setboxz@h{$#1-$}\ht\z@\z@
  $#1\mathord\leftarrow\mkern-6mu\cleaders
  \hbox{$#1\mkern-2mu\copy\z@\mkern-2mu$}\hfill
  \mkern-6mu\box\z@$}
\def\projlim{\qopnamewl@{proj\,lim}}
\def\injlim{\qopnamewl@{inj\,lim}}
\def\varinjlim{\mathpalette\varlim@\rightarrowfill@}
\def\varprojlim{\mathpalette\varlim@\leftarrowfill@}
\def\varliminf{\mathpalette\varliminf@{}}
\def\varliminf@#1{\mathop{\underline{\vrule\@depth.2\ex@\@width\z@
   \hbox{$#1\m@th\operator@font lim$}}}}
\def\varlimsup{\mathpalette\varlimsup@{}}
\def\varlimsup@#1{\mathop{\overline
  {\hbox{$#1\m@th\operator@font lim$}}}}
\def\align{\@verbatim \frenchspacing\@vobeyspaces \@alignverbatim
You are using the "align" environment in a style in which it is not defined.}
\let\csname endalign*\endcsname =\endtrivlist
\def\alignat{\@verbatim \frenchspacing\@vobeyspaces \@alignatverbatim
You are using the "alignat" environment in a style in which it is not defined.}
\let\csname endalignat*\endcsname =\endtrivlist
\def\xalignat{\@verbatim \frenchspacing\@vobeyspaces \@xalignatverbatim
You are using the "xalignat" environment in a style in which it is not defined.}
\let\csname endxalignat*\endcsname =\endtrivlist
\def\gather{\@verbatim \frenchspacing\@vobeyspaces \@gatherverbatim
You are using the "gather" environment in a style in which it is not defined.}
\let\csname endgather*\endcsname =\endtrivlist
\def\multiline{\@verbatim \frenchspacing\@vobeyspaces \@multilineverbatim
You are using the "multiline" environment in a style in which it is not defined.}
\let\csname endmultiline*\endcsname =\endtrivlist
\def\arrax{\@verbatim \frenchspacing\@vobeyspaces \@arraxverbatim
You are using a type of "array" construct that is only allowed in AmS-LaTeX.}
\def\tabulax{\@verbatim \frenchspacing\@vobeyspaces \@tabulaxverbatim
You are using a type of "tabular" construct that is only allowed in AmS-LaTeX.}
\let\csname endarrax*\endcsname =\endtrivlist
\let\csname endtabulax*\endcsname =\endtrivlist
 \def\endequation{%
     \ifmmode\ifinner 
      \iftag@
        \addtocounter{equation}{-1} 
        $\hfil
           \displaywidth\linewidth\@taggnum\egroup \endtrivlist
        \global\tag@false
        \global\@ignoretrue   
      \else
        $\hfil
           \displaywidth\linewidth\@eqnnum\egroup \endtrivlist
        \global\tag@false
        \global\@ignoretrue 
      \fi
     \else   
      \iftag@
        \addtocounter{equation}{-1} 
        \eqno \hbox{\@taggnum}
        \global\tag@false%
        $$\global\@ignoretrue
      \else
        \eqno \hbox{\@eqnnum}
        $$\global\@ignoretrue
      \fi
     \fi\fi
 } 
 \newif\iftag@ \tag@false
 \def\TCItag{\@ifnextchar*{\@TCItagstar}{\@TCItag}}
 \def\@TCItag#1{%
     \global\tag@true
     \global\def\@taggnum{(#1)}%
     \global\def\@currentlabel{#1}}
 \def\@TCItagstar*#1{%
     \global\tag@true
     \global\def\@taggnum{#1}%
     \global\def\@currentlabel{#1}}
     \def\tag{\@ifnextchar*{\@tagstar}{\@tag}}
     \def\@tag#1{%
         \global\tag@true
         \global\def\@taggnum{(#1)}}
     \def\@tagstar*#1{%
         \global\tag@true
         \global\def\@taggnum{#1}}
\def\dfrac#1#2{{\displaystyle {#1 \over #2}}}%
\def\qed{\hfill$\square$\par}
\def\XXint#1#2#3{{\setbox0=\hbox{$#1{#2#3}{\int}$ }
\vcenter{\hbox{$#2#3$ }}\kern-.6\wd0}}
\def\Qcb#1{#1}
\def\FRAME#1#2#3#4#5#6#7#8
\def\enddoc{

\begin{document}
\title[]{Weak monotonicity property of Korevaar-Schoen norms on nested
fractals}
\author[Chang]{Diwen Chang}
\address{Department of Mathematical Sciences, Tsinghua University, Beijing
100084, China.}
\email{cdw20@mails.tsinghua.edu.cn}
\author[Gao]{Jin Gao}
\address{School of Mathematics, Hangzhou Normal University, Hangzhou 310036,
China.}
\email{gaojin@hznu.edu.cn}
\author[Yu]{Zhenyu Yu}
\address{Department of Mathematics, College of Science, National University
of Defense Technology, Changsha 410073, China.}
\email{yuzy23@nudt.edu.cn}
\author[Zhang]{Junda Zhang}
\address{School of Mathematics, South China University of Technology,
Guangzhou 510641, China.}
\email{summerfish@scut.edu.cn}
\date{}

\begin{abstract}
In this paper, we study the weak monotonicity property of $p$-energy related
Korevaar-Schoen norms on connected nested fractals for $1<p<\infty$. Such
property has many important applications on fractals and other metric
measure spaces, such as constructing $p$-energies (when $p=2$ this is
basically a Dirichlet form), generalizing the classical Sobolev type
inequalities and the celebrated Bourgain-Brezis-Mironescu convergence.
\end{abstract}

\subjclass[2010]{28A80, 46E30, 46E35}
\keywords{ Weak monotonicity, $p$-energy norms, Korevaar-Schoen space,
nested fractals.}
\maketitle


\section{Introduction}

Let $(M,d,\mu )$ be a metric measure space with $0<\text{diam}(M)<\infty $
(where `diam' denotes the diameter) throughout the paper. For $p>1$, $\sigma
>0$, we say that weak monotonicity property (NE) holds with $\sigma >0$ on $%
(M,d,\mu )$, if there exists $C>0$ such that for all $u\in KS_{p,\infty
}^{\sigma }$ (see Definition \ref{dfNE}), 
\begin{align*}
& \sup_{r\in (0,\text{\textrm{diam}}(M))}\frac{1}{r^{p\sigma }}\int_{M}\frac{%
1}{\mu (B(x,r))}\int_{B(x,r)}|u(x)-u(y)|^{p}d\mu (y)d\mu (x) \\
& \leq C\liminf_{r\rightarrow 0}\frac{1}{r^{p\sigma }}\int_{M}\frac{1}{\mu
(B(x,r))}\int_{B(x,r)}|u(x)-u(y)|^{p}d\mu (y)d\mu (x),
\end{align*}%
where the metric ball $B(x,r):=\{y\in M:d(x,y)<r\}$.

For $p=2$, property (NE) can be regarded as a form of the monotonicity of a
related family of Dirichlet forms, and is not hard to verify on certain
metric measure spaces with a suitable exponent $\sigma $. It is used to
prove the $\Gamma $-convergence of non-local Dirichlet form to local
Dirichlet form on p.c.f. self-similar sets in \cite{GuLau.2020.AASFM} and on
the Sierpi\'{n}ski carpet \cite{Grigoryanyang.2019.trans}. By the equivalent
Besov semi-norms of Dirichlet forms, the celebrated
Bourgain-Brezis-Mironescu (BBM) convergence (see \cite%
{BourgainBrezisMironescu.2001.439}) of Besov semi-norm is also obtained by
(NE) . In the nonlinear setting $p\neq 2$, (NE) is essentially used in \cite%
{GaoYuZhang2022PA} and \cite{GaoYuZhang2023} to obtain the generalisation of
the BBM convergence for $p$-energies on metric measure spaces, which is a
nonlinear generalisation of Dirichlet forms since the bi-linearity is
missing.

We briefly introduce the $p$-energies defined on fractals and general metric
measure spaces, which attracts considerable attention recently (see \cite%
{BaudoinLecture2022,Caoqiugu2022adv,GaoYuZhang2022PA,GaoYuZhang2023,Kigami2022penergy,Shimuzu.2022}%
). For a smooth Euclidean area $\Omega $, the $p$-energy is simply defined
as $\int_{\Omega }|\nabla u(x)|^{p}dx$, but for fractals or general metric
measure spaces, it is not easy to define proper gradient structures in \cite%
{StrichartzWong04}. One kind of the construction of $p$-energy ($1<p<\infty $%
) is based on the graph-approximation to the spaces, see \cite[Section 6]%
{Caoqiugu2022adv} for p.c.f. fractals and \cite%
{Kigami2022penergy,Shimuzu.2022} for other metric spaces. Another kind uses
the Korevaar-Schoen (or Besov) norms, for example in \cite{GaoYuZhang2022PA}%
, the generalised BBM convergence is used to construct $p$-energy on
homogeneous p.c.f. self-similar sets. The property (NE) can be regarded as
certain weak monotonicity of Korevaar-Schoen (or Besov) norm type. Similar
properties in terms of heat kernel-based type (KE) and discretized type (VE)
were considered in \cite{GaoYuZhang2023}, where (KE) is a crucial assumption
in the main theorem of \cite{AlonsoBaudoinchen2020JFA}. 

A pretty natural and important question is to check the validity of (NE) on
typical fractal spaces for all $p>1$ (with suitable exponents) instead of
the special case $p=2$. In \cite{BaudoinLecture2022}, Baudoin showed that
the weak monotonicity property (NE) can be deduced from the combination of
Poinc\'{a}re inequality and capacity conditions in measure metric spaces,
and then verified (NE) on the Vicsek set and the Sierpi\'{n}ski gasket for
all $p>1$. Moreover, Baudoin left an open question that the validity of (NE)
for some other spaces in \cite{BaudoinLecture2022}. (NE) is very recently
verified on the Sierpi\'{n}ski carpet by Yang \cite{Yang23} and
Murugan-Shimizu \cite{MuruganShimizu23} with two different arguments (for
all $p>1$). It is also very recently proved that (NE) holds for nested
fractals in \cite{GaoYuZhang2023}, where the authors mainly use the
equivalence of different forms of weak monotonicity properties (termed (VE)
therein). In this paper, we use a different argument based on Baudoin's
method in \cite{BaudoinLecture2022} to verify (NE) for nested fractals.

We will introduce necessary definitions to formally state the main theorem
in Section \ref{secdf}, and present our proof in Section \ref{sec3}.

\textbf{Notation}: The letters $C$,$C^{\prime }$,$C^{\prime\prime }$,$C_{i}$
are universal positive constants depending only on $M$ which may vary at
each occurrence.

\section{Definitions and the Main Result}

\label{secdf} In this section, we introduce the definitions of
Korevaar-Schoen spaces and nested fractals, and then build up the main
theorem.

\subsection{The Korevaar-Schoen space}

Let $p>1$, $\sigma >0$. For $u\in L^{p}(M;\mu )$, $r>0$, denote 
\begin{equation}
\Phi _{u}^{\sigma }(r):=\frac{1}{r^{p\sigma }}\int_{M}\frac{1}{\mu (B(x,r))}%
\int_{B(x,r)}|u(x)-u(y)|^{p}d\mu (y)d\mu (x).  \label{phi_u}
\end{equation}%
The Korevaar-Schoen space $KS_{p,\infty }^{\sigma }$ in \cite[Section 4.2]%
{BaudoinLecture2022} is defined as 
\begin{equation}
KS_{p,\infty }^{\sigma }:=\{u\in L^{p}(M,\mu )|\limsup_{r\rightarrow 0}\Phi
_{u}^{\sigma }(r)<\infty \}.  \label{ks_energy}
\end{equation}

%
%
%
We define semi-norm for $u\in L^{p}(M,\mu )$ by 
\begin{equation*}
\Vert u\Vert _{KS_{p,\infty }^{\sigma }}^{p}:=\limsup_{r\rightarrow 0}\Phi
_{u}^{\sigma }(r).
\end{equation*}

We restate the definition of property (NE) formally.

\begin{definition}
\label{dfNE}We say that a metric measure space $(M,d,\mu )$ satisfies
property (NE) with $\sigma >0$, if there exists $C>0$ such that, for all $%
u\in KS_{p,\infty }^{\sigma }$, 
\begin{equation}
\sup_{r\in (0,\mathrm{diam}(M))}\Phi _{u}^{\sigma }(r)\leq
C\liminf_{r\rightarrow 0}\Phi _{u}^{\sigma }(r).  \label{NE1}
\end{equation}
\end{definition}

\subsection{Nested fractals}

In this section, we introduce a class of highly symmetric p.c.f. fractals,
namely, the nested fractals introduced by Lindstr{\o }m in \cite%
{Lindstrom.1990.}. Typical examples of nested fractals are the Sierpi\'{n}%
ski gasket and Vicsek set see Figure \ref{fig1}. We will follow the notation
from Kumagai in \cite{Kumagai.1993.PTaRF205}.

\begin{figure}[tbph]
\centering
\subfloat[ Sierpi\'{n}ski gasket]{\includegraphics[width=6cm]{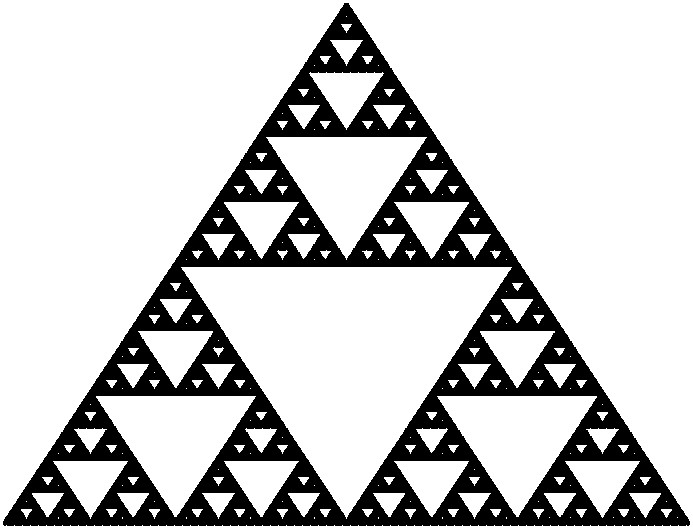}}%
\quad \quad\quad\quad 
\subfloat[ Vicsek
set]{\includegraphics[width=6cm]{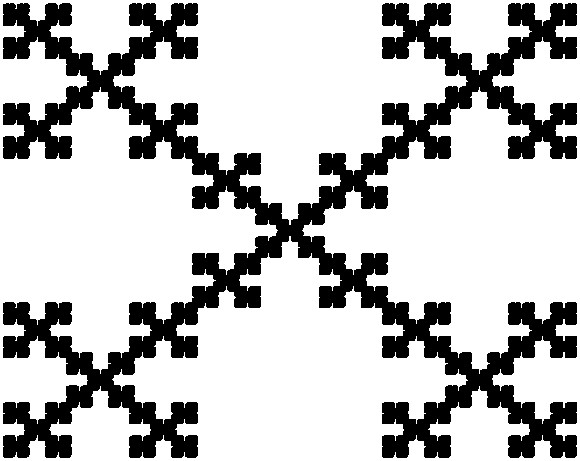}}
\caption{Two typical examples of nested fractals}
\label{fig1}
\end{figure}

We focus on homogeneous self-similar p.c.f. IFSs $\left\{ F_{i}\right\}
_{i=1}^{N}$ in $\mathbb{R}^{D}$. The word `\emph{homogeneous}' means that
the mappings are in the form 
\begin{equation}
F_{i}(x)=\rho \left( x-b_{i}\right) +b_{i},\quad 1\leq i\leq N  \label{f_i}
\end{equation}%
where $\rho \in (0,1)$ is the contraction ratio. Next, we introduce the
natural symbolic space associated with the IFS. Assume that $K$ is the
attractor of the IFS $\left\{ F_{i}\right\} _{i=1}^{N}$. Let $W=\{1,2,...,N\}
$, $W_{n}$ be the set of words with length $n$ over $W$, and $W^{\mathbb{N}}$
be the set of all infinite words over $W$. Let $w=\mathrm{w_{1}w_{2}}...\in
W^{\mathbb{N}}$, the canonical projection $\pi :$ $W^{\mathbb{N}}\rightarrow
K$ is defined by $\{\pi (w)\}:=\bigcap\limits_{n\in \mathbb{N}^{\ast }}F_{%
\mathrm{w_{1}...w_{n}}}(K), $ where $F_{\mathrm{w_{1}...w_{n}}}:=F_{\mathrm{%
w_{1}}}\circ ...\circ F_{\mathrm{w_{n}}}$. Then we can define the critical
set $\Gamma $ and the post-critical set $\mathcal{P}$ by 
\begin{equation*}
\Gamma =\pi ^{-1}\left( \bigcup_{1\leq i<j\leq N}\left( K_{i}\cap
K_{j}\right) \right) ,\quad \mathcal{P}=\bigcup_{m\geq 1}\tau ^{m}(\Gamma ),
\end{equation*}%
where $K_{i}=F_{i}(K),\tau $ is the left shift by one index on $W^{\mathbb{N}%
}$ (see \cite[Definition 1.3.13]{Kigami.2001.}). We say the IFS is\emph{\
post-critically finite} (p.c.f.) if $\mathcal{P}$ is finite. For any $w\in
W_{n}$, define 
\begin{equation*}
V_{0}=\pi (\mathcal{P}),\quad V_{n}=\bigcup_{w\in W_{n}}F_{w}\left(
V_{0}\right) ,\quad V_{\ast }=\bigcup_{n\geq 1}V_{n},
\end{equation*}%
then $K$ is the closure of $V_{\ast }$ and $V_{w}:=F_{w}\left( V_{0}\right) $%
. For $w\in W_n$, $K_{w}:=F_{w}\left( K\right) $ is called a $n$-cell of $K$.

Now we give the definition of nested fractals. Let $F$ be the set of all
fixed points of $\{F_{i}\}_{i=1}^{N}$. We say $x\in F$ is an essential fixed
point, if there exists $y\in F,y\neq x$ and $i,j\in \{1,2,\cdots ,N\}$ so
that $F_{i}(x)=F_{j}(y)$. Let $F^{(0)}$ be the set of all essential fixed
points.

\begin{definition}[Nested Fractals]
We say $K$ is a nested fractal, if the four conditions below are all
satisfied:

i) The IFS $\{F_i\}_{i=1}^N$ is homogeneous and satisfies the open set
condition;

ii) Connectivity: for any pair of $1$-cells $K_{i},K_{j}$, there exists a
chain $\{{a_{k}\}}_{k=0}^{n}\subset \{1,2,\cdots ,N\}$ so that $%
a_{0}=i,a_{n}=j$ and $K_{a_{k}}\cap K_{a_{k+1}}\neq \emptyset $ for any $%
0\leq k<n$;

iii) Symmetry: For any $x,y\in \mathbb{R}^{D}$, denote 
\begin{equation*}
H_{xy}:=\{z\in \mathbb{R}^{D}:d(z,x)=d(z,y)\},
\end{equation*}%
and let $U_{xy}$ be the reflection related to $H_{xy}$. For any $n\geq 1$
and for any $x_{0}\neq y_{0}\in F^{(0)}$, $U_{x_{0}y_{0}}$ maps $n$-cells to 
$n$-cells, and maps any $n$-cell containing points in both sides of $%
H_{x_{0}y_{0}}$ to itself.

iv) Nesting: For any $n\geq 1$, and any $w,v\in W^n$ with $w\neq v$, then 
\begin{equation}  \label{nest}
K_w\cap K_v=F_w(F^{(0)})\cap F_v(F^{(0)}).
\end{equation}
\end{definition}

According to the above definition, it is easy to see that nested fractals
are homogeneous p.c.f. self-similar fractals. Hence we will directly apply
related results on homogeneous p.c.f. self-similar set to nested fractals in
Section \ref{sec3}.

Without loss of generality, we can assume that \textrm{diam}$(K)$=1 by an
affine transformation on $\{b_{i}\}_{i=1}^{n}$. From now on, whenever we
mention a nested fractal, we denote it by $K$ and assume it is connected
with \textrm{diam}$(K)$=1. Since the open set condition (OSC) is fulfilled,
the $\alpha $-dimensional Hausdorff measure of $K$, denoted by $\mu $,
exists and is \emph{$\alpha $-regular}, namely, the measure of any metric
ball $B(x,r)$ with $0<r\leq 1$ in $K$ satisfies 
\begin{equation}
C^{-1}r^{\alpha }\leq \mu (B(x,r))\leq Cr^{\alpha },  \label{alpha_r}
\end{equation}%
where $C\geq 1$ is a constant and $\alpha =\text{dim}_{H}(K)=-\log N/\log
\rho $.

\subsection{The main result}

From now on, we assume that $M$ is a nested fractal $K$, with $d$ being the
Euclidean metric restricted to $M$ and $\mu $ being the $\text{dim}_{H}(M)$%
-dimensional Hausdorff measure. Define semi-norm for $u\in L^{p}(K,\mu )$ by 
\begin{equation*}
\lbrack u]_{B_{p,\infty }^{\sigma }}^{p}=\sup_{r\in (0,1)}\Phi _{u}^{\sigma
}(r),
\end{equation*}%
with its domain 
\begin{equation*}
B_{p,\infty }^{\sigma }:=B_{p,\infty }^{\sigma }(K):=\{u\in L^{p}(K,\mu )|\
[u]_{B_{p,\infty }^{\sigma }}<\infty \}.
\end{equation*}%
Clearly, the Besov space $B_{p,\infty }^{\sigma }$ coincides with the
Korevaar-Schoen space $KS_{p,\infty }^{\sigma }$.

We define the \emph{$p$-critical exponent of $(K,d,\mu )$} by%
\begin{equation*}
\sigma _{p}^{\#}=\sup \{\sigma >0:B_{p,\infty }^{\sigma }\text{ contains
non-constant functions}\}.
\end{equation*}%
This critical exponent is important, because it is proved by \cite[Lemma 4.7]%
{BaudoinLecture2022} that, property (NE) can only hold with $%
\sigma\geq\sigma _{p}^{\#},$ but the space $B_{p,\infty }^{\sigma }$ is
trivial when $\sigma>\sigma _{p}^{\#}.$ So from now on, whenever we mention
property (NE), we assume taking $\sigma=\sigma _{p}^{\#}$ and omit it (since
the underlying space is fixed).

\begin{theorem}
\label{thm1} Property (NE) holds in connected nested fractals.
\end{theorem}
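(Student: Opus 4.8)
The plan is to follow the reduction established by Baudoin in \cite{BaudoinLecture2022}: on an Ahlfors-regular space, property (NE) is implied by the conjunction of a $p$-Poincar\'e inequality and a capacity (cutoff) estimate. Since $\mu$ is $\alpha$-regular by \eqref{alpha_r}, it suffices to verify these two structural inequalities on a connected nested fractal $K$ at the critical exponent $\sigma=\sigma_p^{\#}$, and then to let Baudoin's iteration over the $\rho$-adic scales compare $\sup_r\Phi_u^\sigma(r)$ with $\liminf_{r\to 0}\Phi_u^\sigma(r)$. The first concrete step is to pass from the continuous functional to a discrete cell-energy adapted to the self-similar structure. For $\rho^{n+1}\le r<\rho^n$ I would establish the two-sided comparison $\Phi_u^\sigma(r)\asymp \mathcal{E}_n(u)$, where
\[
\mathcal{E}_n(u):=\rho^{-n(p\sigma+\alpha)}\sum_{K_w\cap K_v\neq\emptyset}\int_{K_w}\int_{K_v}|u(x)-u(y)|^p\,d\mu(x)\,d\mu(y),
\]
the sum running over pairs of $n$-cells that meet (including $w=v$). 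This comparison rests only on $\alpha$-regularity and on the finite, self-similar overlap pattern of the balls $B(x,r)$ with the cells meeting a fixed $K_w$, which in turn follows from the nesting condition \eqref{nest}.

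For the Poincar\'e inequality I would fix a ball $B=B(x,r)$ with $r\asymp\rho^n$, cover it by finitely many $n$-cells, and control the oscillation $\int_B|u-u_B|^p\,d\mu$ by a chaining argument: any two cells meeting $B$ are joined by a chain of neighbouring cells of bounded length by the connectivity condition (ii), so a telescoping $\ell^p$ estimate bounds the global oscillation by a sum of neighbouring-cell differences, i.e.\ by $r^{p\sigma}\mathcal{E}_n(u)$ localized near $B$. The symmetry condition (iii) is what renders the chaining constants uniform in the cell and in the level $n$, while the nesting condition \eqref{nest} keeps the neighbour bookkeeping finite and genuinely self-similar. Matching the scaling exponent produced by this estimate against the definition of $\sigma_p^{\#}$ is the step that pins the inequality to the critical value.

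For the capacity condition I would construct explicit cutoff functions adapted to the $\rho$-adic annuli $B(x,R)\setminus B(x,r)$, built cell-by-cell so as to equal $1$ on the inner cells, $0$ on the outer cells, and interpolate across a collar of bounded cell-width. Their Korevaar--Schoen energy is estimated by the same discretization $\mathcal{E}_n$, and by self-similarity reduces to finitely many level-$0$ configurations, the symmetry (iii) again ensuring that only boundedly many inequivalent local patterns occur. This yields the capacity upper bound $\Vert\phi\Vert_{KS_{p,\infty}^\sigma}^p\le C\,(R-r)^{-p\sigma}\mu(B(x,R))$ with the scaling forced by $\sigma_p^{\#}$.

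The main obstacle I anticipate is the \emph{scale-invariant} Poincar\'e inequality at the critical exponent: obtaining a constant uniform across all levels $n$ requires the level-$n$ chaining estimate to be exactly self-similar, which hinges on the precise identification of the $p$-energy renormalization factor and on using the reflections of condition (iii) together with \eqref{nest} to glue single-cell estimates without loss. Once both inequalities are in hand, Baudoin's argument gives \eqref{NE1} and hence Theorem \ref{thm1}; the triviality of $B_{p,\infty}^\sigma$ for $\sigma>\sigma_p^{\#}$ from \cite[Lemma 4.7]{BaudoinLecture2022} confirms that $\sigma=\sigma_p^{\#}$ is the only exponent that needs to be treated.
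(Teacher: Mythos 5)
Your high-level plan (verify Baudoin's Poincar\'e-plus-capacity hypotheses and invoke his implication PI $+$ capacity $\Rightarrow$ (NE)) is legitimate in principle, but the sketch breaks down exactly at the point you yourself flag as ``the main obstacle'', and the tools you propose there (symmetry, nesting, uniform chaining constants) cannot repair it. Your discretization $\Phi_u^\sigma(r)\asymp\mathcal{E}_n(u)$ is fine, and your chaining argument does control $\int_B|u-u_B|^p\,d\mu$ by the cell energy \emph{at the same scale} $n$ (with $r\asymp\rho^n$). But the Poincar\'e inequality needed in Baudoin's framework must have the energy at \emph{arbitrarily fine} scales --- a $\liminf_{s\to 0}$ of the localized Korevaar--Schoen functional --- on its right-hand side; if it only had the same-scale or supremum energy, the implication could at best give $\sup_r\Phi_u^\sigma(r)\le C\limsup_{s\to0}\Phi_u^\sigma(s)$, never \eqref{NE1}. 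Passing from ``energy at scale $n$'' to ``$\liminf$ of energies at all finer scales'' is precisely the discrete weak monotonicity, property (E) of Definition~\ref{dfE} and Lemma~\ref{lemma:E}, whose proof rests on the existence of the fixed point $E$ of the renormalization map $\mathcal{T}$ in (\ref{TE}) from \cite{Caoqiugu2022adv}, together with the identification (\ref{impo}) of $\sigma_p^{\#}$ through the renormalization factor $s$. The symmetry condition (iii) and nesting (\ref{nest}) make geometric constants uniform, but no uniformity in a single-scale estimate produces this cross-scale inequality: as written, your PI step silently assumes a statement of the same depth as the theorem you are proving.

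The capacity step has the same defect. A cutoff built by interpolating ``cell-by-cell across a collar'' has no a priori finite $KS_{p,\infty}^{\sigma_p^{\#}}$ seminorm: at the critical exponent, membership in $B_{p,\infty}^{\sigma_p^{\#}}$ is restrictive, and reducing the picture by self-similarity to ``finitely many level-$0$ configurations'' does not control the energy contributions of the infinitely many finer scales created by the interpolation. The standard remedy is to prescribe the cutoff on some $V_n$ and extend it $p$-harmonically, at which point one needs exactly the fixed-point estimates of Lemma~\ref{Hp2} (i.e.\ (\ref{En-1})). For comparison, the paper bypasses PI and capacity entirely: it takes cell averages $\hat u_n$ of $u$, extends them $p$-harmonically to H\"older-continuous functions $H_p(\hat u_n)$ (Lemmas~\ref{Hp2} and \ref{Hp}), shows $\Phi_{H_p(\hat u_n)}^{\sigma_p^{\#}}(r)\le C\,\Phi_u^{\sigma_p^{\#}}(3\rho^n)$ and $H_p(\hat u_n)\to u$ uniformly, and concludes by Fatou's lemma. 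To salvage your route you would have to import property (E) and the renormalization fixed point to prove the critical-exponent PI and the cutoff bound --- at which point the detour through Baudoin's theorem adds little --- or give an independent proof of that renormalization input, which is the actual mathematical content missing from your proposal.
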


\section{Proof of the main theorem}

\label{sec3} In this section, we present the proof of our main Theorem \ref%
{thm1}. Before that, we collect some necessary technical results from \cite%
{Caoqiugu2022adv,GaoYuZhang2022PA,GaoYuZhang2023}.

\subsection{Property (E)}

Let $K$ be a homogeneous p.c.f. self-similar set. Define 
\begin{equation*}
E_{n}^{(p)}(u):=\sum_{x,y\in V_{w},|w|=n}|u(x)-u(y)|^{p},\ \mathcal{E}%
_{n}^{\sigma }(u):=\rho ^{-n(p\sigma -\alpha )}E_{n}^{(p)}(u).
\end{equation*}

Denote a discretization of the $B_{p,\infty }^{\sigma }$-norm by 
\begin{equation*}
\mathcal{E}_{p,\infty }^{\sigma }(u)=\sup\limits_{n\geq 0}\mathcal{E}%
_{n}^{\sigma }(u).
\end{equation*}

We introduce the weak monotonicity property (E) for $K$, following the
definition of $p$-energy form in \cite{GYZ2022F}.

\begin{definition}
(\cite[Definition 3.1]{GaoYuZhang2022PA}) \label{dfE} We say that a
homogeneous p.c.f. self-similar set $K$ satisfies \textrm{property (E) with
respect to} $\sigma >\alpha /p$ if there exists a positive constant $C$ such
that

\begin{itemize}
\item[(i)~] for any $u\in B_{p,\infty }^{\sigma }$ and for all $n\geq 1$, $%
\mathcal{E}_{0}^{\sigma }(u)\leq C\mathcal{E}_{n}^{\sigma }(u)$,

\item[(ii)] for any $u\in l(V_{0})$, there exists an extension $\tilde{u}\in
B_{p,\infty }^{\sigma }$.
\end{itemize}
\end{definition}

Note that property (E) (i) implies the following weaker property: 
\begin{equation*}
\sup_{n\geq 0}\mathcal{E}_{n}^{\sigma}(u)\leq C\liminf_{n\rightarrow \infty }%
\mathcal{E}_{n}^{\sigma}(u).
\end{equation*}

\begin{lemma}
(\cite[Lemma 4.10]{GaoYuZhang2023}) \label{lemma:E}For a nested fractal,
property (E) holds with $\sigma=\sigma_{p}^\#$ and $\sigma
_{p}^{\#}>\alpha/p $ for all $1<p<\infty $. \label{yyqx}
\end{lemma}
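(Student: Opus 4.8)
The plan is to reduce both assertions to the construction of a single $p$-energy renormalization factor $\rho_p\in(0,1)$ attached to the discrete energies $E_n^{(p)}$, and then to read off the critical exponent from $\rho_p$.

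First I would build $\rho_p$ from minimal extension energies. For $g\in l(V_0)$ put $\mathcal{H}_n(g):=\min\{E_n^{(p)}(v):v|_{V_0}=g\}$, the least level-$n$ energy among extensions of $g$. The self-similar decomposition $E_{n+1}^{(p)}(v)=\sum_{i=1}^{N}E_n^{(p)}(v\circ F_i)$ makes $\{\mathcal{H}_n\}$ sub/super-multiplicative in $n$, and the full reflection symmetry of a nested fractal forces the optimizers to respect the symmetry group, which controls the asymptotics; a Fekete-type argument then yields $\rho_p:=\lim_{n\to\infty}\bigl(\mathcal{H}_n(g)/E_0^{(p)}(g)\bigr)^{1/n}$, independent of the non-constant datum $g$. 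The essential point is the \emph{strict} inequality $\rho_p<1$: it says that maintaining a fixed potential difference across $K$ is strictly costlier at finer scales (equivalently, the $p$-walk dimension exceeds $\alpha$), and I would prove it by a one-step variational estimate on the refinement $V_0\subset V_1$, using symmetry to reduce to finitely many junction configurations.

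With $\rho_p$ available I fix the exponent by $\rho^{\,p\sigma_p^{\#}-\alpha}=\rho_p$, i.e.\ $\sigma_p^{\#}=\tfrac1p\bigl(\alpha+\log_\rho\rho_p\bigr)$; this value coincides with the Besov critical exponent by testing with the $p$-harmonic function (which gives non-constant elements of $B_{p,\infty}^{\sigma}$ exactly for $\sigma\le\sigma_p^{\#}$) and comparing arbitrary functions to harmonic ones (which rules out non-constant functions for larger $\sigma$). Since $\rho_p,\rho\in(0,1)$ we have $\log_\rho\rho_p>0$, hence $\sigma_p^{\#}>\alpha/p$ for every $1<p<\infty$. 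For this exponent $\mathcal{E}_n^{\sigma_p^{\#}}(u)=\rho_p^{-n}E_n^{(p)}(u)$. Property (E)(i) is then the monotonicity of $n\mapsto\mathcal{E}_n^{\sigma_p^{\#}}(u)$: granting the exact per-level scaling $\mathcal{H}_1(g)=\rho_p E_0^{(p)}(g)$, the cell decomposition gives $E_{n+1}^{(p)}(u)\ge\rho_p E_n^{(p)}(u)$ for every $u$, whence $\mathcal{E}_0^{\sigma_p^{\#}}(u)\le\mathcal{E}_n^{\sigma_p^{\#}}(u)$. For (E)(ii) I take $\tilde u$ to be the $p$-harmonic extension of $u\in l(V_0)$; its renormalized energies $\mathcal{E}_n^{\sigma_p^{\#}}(\tilde u)$ are bounded (essentially constant in $n$), and the standard equivalence on $\alpha$-regular p.c.f. sets between $\sup_n\mathcal{E}_n^{\sigma}$ and the seminorm $[\,\cdot\,]_{B_{p,\infty}^{\sigma}}$ promotes this to $\tilde u\in B_{p,\infty}^{\sigma_p^{\#}}$.

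The main obstacle is the nonlinearity when $p\neq2$. For $p=2$ the trace of the level-$1$ energy onto $V_0$ is a quadratic form proportional to $E_0^{(2)}$ by symmetry, so $\mathcal{H}_1(g)/E_0^{(2)}(g)$ is a genuine constant and the per-level scaling, hence the monotonicity underlying (E)(i), is exact (Lindstr{\o}m--Kigami theory). For general $p$ the extension operator is nonlinear and $\mathcal{H}_1(g)/E_0^{(p)}(g)$ need not be constant in $g$, so exact renormalization does not come for free; one must instead produce a genuinely self-similar $p$-energy and transfer its monotonicity back to the graph energies $E_n^{(p)}$. Constructing this exactly renormalized $p$-energy on nested fractals, together with the strict contractivity $\rho_p<1$, is the delicate heart of the matter, and it is precisely here that the symmetry of nested fractals and the recent $p$-energy constructions on p.c.f. self-similar sets (e.g.\ \cite{Caoqiugu2022adv,GaoYuZhang2022PA}) are indispensable.
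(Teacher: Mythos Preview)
The paper does not give its own proof of this lemma; it is imported wholesale from \cite[Lemma 4.10]{GaoYuZhang2023}. What the present paper does record (in Section~3.2, equations \eqref{E-1}--\eqref{En}) is the mechanism behind that cited proof: the existence of a fixed point $E$ of the renormalization map $\mathcal{T}$ from \cite[Theorem 4.2]{Caoqiugu2022adv}, the resulting exact monotonicity $[\Lambda^{m}E]_{V_{m-1}}=\Lambda^{m-1}E$, the identification $\sigma_p^{\#}=(\log_\rho s+\alpha)/p$ with $s\in(0,1)$, and the comparability $\mathcal{E}_n^{\sigma_p^{\#}}(u)\simeq\Lambda^nE(u)$. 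Property~(E) is read off from these.

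Your final paragraph lands on exactly this route, so the plan is ultimately the correct one. The earlier paragraphs, however, are not a proof but a heuristic that you yourself retract: the Fekete limit $\rho_p=\lim_n(\mathcal H_n(g)/E_0^{(p)}(g))^{1/n}$ may well exist, but the step ``granting the exact per-level scaling $\mathcal H_1(g)=\rho_pE_0^{(p)}(g)$'' is precisely what fails for $p\neq2$, as you note. Without it the inequality $E_{n+1}^{(p)}(u)\ge\rho_pE_n^{(p)}(u)$ does not follow, and (E)(i) is not established. The actual argument does not rescue the naive graph energy $E_0^{(p)}$; it replaces it from the outset by the Cao--Gu--Qiu fixed point $E$, for which the scaling \emph{is} exact by construction, and then uses the symmetry of the nested fractal to show $E\simeq E_0^{(p)}$ on $l(V_0)$. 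That comparability is what produces the constant $C$ in Definition~\ref{dfE}(i). Likewise, the strict inequality $s<1$ (your $\rho_p<1$) is obtained in \cite{Caoqiugu2022adv} as part of the fixed-point analysis, not by a bare one-step variational estimate; your sketch of that step is too vague to count as an argument. In short: keep your last paragraph, and rebuild the middle of the proof around the fixed point $E$ rather than around $E_0^{(p)}$.
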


Unfortunately, not all homogeneous p.c.f. fractals satisfy property (E),
since property (E) guarantees that $\sigma_p^\ast=\sigma_p^\#$ (see \cite[%
Proposition 3.4]{GaoYuZhang2022PA}), where 
\begin{equation*}
\sigma _{p}^{\ast }=\sup \{\sigma >0:B_{p,\infty }^{\sigma }\cap C(K)\text{
is dense in }C(K)\}.
\end{equation*}
Readers may see \cite{GuLau.2020.TAMS} for counter-examples when $p=2$.

\subsection{$p$-harmonic extension}

In this subsection, we will construct an auxiliary function whose energy can
be controlled by its discretization, and the process is based on the
existence of fixed point in \cite{Caoqiugu2022adv}.

Actually, for any nested fractal $K$, according to the proof of \cite[%
Proposition 4.10]{GaoYuZhang2023}, a byproduct will be obtained, which
indicates the equivalence of two $p$-energies defined in \cite%
{Caoqiugu2022adv} and \cite{GaoYuZhang2022PA}. We will use partial results
from \cite{Caoqiugu2022adv} and \cite{GaoYuZhang2022PA} as follows. Let $%
s\in (0,1)$ and define an operator $\Lambda $ maps $p$-energies on $l(V_{0})$
to $p$-energies on $l(V_{1})$, such that for any $u\in l(V_{1})$, 
\begin{equation}
\Lambda E(u):=\frac{1}{s}\sum_{i=1}^{N}E(u\circ F_{i}).  \label{E-1}
\end{equation}%
By \cite[Theorem 4.2]{Caoqiugu2022adv} (see also \cite[equation (4.34)]%
{GaoYuZhang2023}), there exists a fix point $E$ defined on $l(V_{0})$ of the
renormalization map $\mathcal{T}$ such that 
\begin{equation}
\mathcal{T}E=E,  \label{TE}
\end{equation}%
where the operator $\mathcal{T}$ is defined as in \cite[Definition 3.1]%
{Caoqiugu2022adv}. For this $E$, by the proof of \cite[Lemma 3.2(b)]%
{Caoqiugu2022adv}, we know that for all $m\in \mathbb{N}^{+}$ 
\begin{equation*}
\lbrack \Lambda ^{m}E]_{V_{m-1}}=\Lambda ^{m-1}\mathcal{T}E=\Lambda ^{m-1}E,
\end{equation*}%
that is 
\begin{equation}
\Lambda ^{m-1}E(u)=\min \{\Lambda ^{m}E(v):\ v\in l(V_{m}),\ v\big|%
_{V_{m-1}}=u\}.  \label{AM}
\end{equation}%
Therefore, for any $u\in l(V_{n})$, we can define its $p$-harmonic
extensions $\{u_{m}\}_{m=n+1}^{\infty }$ satisfying 
\begin{equation*}
\Lambda ^{n+1}E(u_{n+1})=\Lambda ^{n+1}E(u)\ \ with\ \ u_{n+1}\in
l(V_{n+1}),\ u_{n+1}\big|_{V_{n}}=u,
\end{equation*}%
and for $m\geq n+2$, 
\begin{equation*}
\Lambda ^{m}E(u_{m})=\Lambda ^{m-1}E(u_{m-1})\ \ with\ \ u_{m}\in l(V_{m}),\
u_{m}\big|_{V_{m-1}}=u_{m-1},
\end{equation*}%
then we can define $H_{p}(u)\in l(V_{\ast })$ satisfying 
\begin{equation}
H_{p}(u)=\lim_{m\rightarrow \infty }u_{m}.  \label{HP}
\end{equation}%
Besides, we have by \cite[equations (4.37), (4.38), (4.39)]{GaoYuZhang2023}
that 
\begin{equation}
\sigma _{p}^{\#}=\dfrac{\log _{\rho }s+\alpha }{p},  \label{impo}
\end{equation}%
and for any positive integer $n$, 
\begin{equation}
\mathcal{E}_{n}^{\sigma _{p}^{\#}}(u)\simeq \Lambda ^{n}E(u)  \label{En}
\end{equation}%
for all $u\in B_{p,\infty }^{\sigma _{p}^{\#}}$.

\begin{lemma}
\label{Hp2} Let $K$ be a nested fractal and $n\in \mathbb{N}^{+}$. Then
there exists a constant $C>0$ such that for all $u\in l(V_{n})$, 
\begin{equation}
\mathcal{E}_{p,\infty }^{\sigma _{p}^{\#}}(H_{p}(u))\leq C \mathcal{E}%
_{n}^{\sigma _{p}^{\#}}(u),  \label{En-1}
\end{equation}%
where $H_{p}(u)$ is defined as in (\ref{HP}).
\end{lemma}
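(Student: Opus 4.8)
The plan is to pull everything back to the fixed-point energy $E$, exploiting the equivalence $\mathcal{E}_{m}^{\sigma_{p}^{\#}}(v)\simeq\Lambda^{m}E(v)$ from \eqref{En} (with constants independent of the level $m$) together with the defining minimization property \eqref{AM} of the $p$-harmonic extension. Since $\mathcal{E}_{p,\infty}^{\sigma_{p}^{\#}}(H_{p}(u))=\sup_{m\geq0}\mathcal{E}_{m}^{\sigma_{p}^{\#}}(H_{p}(u))$, it suffices to bound $\Lambda^{m}E(H_{p}(u)|_{V_{m}})$ uniformly in $m$ by a constant multiple of $\Lambda^{n}E(u)\simeq\mathcal{E}_{n}^{\sigma_{p}^{\#}}(u)$. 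I would split the supremum into the fine regime $m\geq n$ and the coarse regime $0\leq m<n$, treating them by the two mechanisms below.

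For the fine scales $m\geq n$, recall that $H_{p}(u)|_{V_{m}}=u_{m}$ is exactly the iterated $p$-harmonic extension from $V_{n}$. Telescoping the defining relations $\Lambda^{m}E(u_{m})=\Lambda^{m-1}E(u_{m-1})$ down to level $n$, and using \eqref{AM} at the first step to identify $\Lambda^{n+1}E(u_{n+1})=\Lambda^{n}E(u)$, yields the exact energy-preservation identity $\Lambda^{m}E(u_{m})=\Lambda^{n}E(u)$ for every $m\geq n$. Feeding this through \eqref{En} gives $\mathcal{E}_{m}^{\sigma_{p}^{\#}}(H_{p}(u))\leq C\,\Lambda^{n}E(u)\leq C'\,\mathcal{E}_{n}^{\sigma_{p}^{\#}}(u)$ on this entire range, with constants coming only from the level-uniform equivalence.

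For the coarse scales $0\leq m<n$, the point is that $H_{p}(u)$ agrees with $u$ on $V_{m}\subset V_{n}$, so $\mathcal{E}_{m}^{\sigma_{p}^{\#}}(H_{p}(u))=\mathcal{E}_{m}^{\sigma_{p}^{\#}}(u|_{V_{m}})$. By transitivity of the trace together with the single-step identity $[\Lambda^{m}E]_{V_{m-1}}=\Lambda^{m-1}E$ underlying \eqref{AM}, one obtains $\Lambda^{m}E(u|_{V_{m}})=\min\{\Lambda^{n}E(w):w\in l(V_{n}),\ w|_{V_{m}}=u|_{V_{m}}\}$; since $u$ itself is an admissible competitor in this minimization, it follows that $\Lambda^{m}E(u|_{V_{m}})\leq\Lambda^{n}E(u)$. (Alternatively, the coarse-to-fine comparison supplied by property (E)(i) of Lemma \ref{lemma:E}, applied on subcells and summed via self-similarity, gives the same estimate.) Invoking \eqref{En} once more produces $\mathcal{E}_{m}^{\sigma_{p}^{\#}}(H_{p}(u))\leq C\,\mathcal{E}_{n}^{\sigma_{p}^{\#}}(u)$ for $m<n$. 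Combining the two regimes and taking the supremum over all $m\geq0$ gives \eqref{En-1}.

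The main difficulty I anticipate is bookkeeping rather than conceptual. First, I must ensure the comparison constant in \eqref{En} is genuinely uniform across all levels $m$, since the argument takes a supremum over $m$; this uniformity ultimately comes from comparing $E$ with the standard $V_{0}$-energy and propagating through $\Lambda^{m}$ by self-similarity, using the fixed-point identity \eqref{TE} and the value of $\sigma_{p}^{\#}$ in \eqref{impo}. Second, I must avoid the apparent circularity of applying \eqref{En} to the infinite extension $H_{p}(u)$: this is resolved by applying the equivalence only at each finite level $m$, where it is a statement about two norms on the finite-dimensional space $l(V_{m})$, and only afterwards passing to the supremum. The resulting finiteness of $\sup_{m}\Lambda^{m}E(u_{m})$ then simultaneously certifies that $H_{p}(u)\in B_{p,\infty}^{\sigma_{p}^{\#}}$.
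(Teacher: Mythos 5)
Your proposal is correct and follows essentially the same route as the paper's proof: the fine scales $m\geq n$ are handled by the energy-preservation identity $\Lambda^{m}E(H_{p}(u)|_{V_{m}})=\Lambda^{n}E(u)$ coming from the construction (\ref{HP}), the coarse scales $m<n$ by the minimization property (\ref{AM}) (your competitor argument is exactly the paper's inequality $\Lambda^{n}E(u)\geq\Lambda^{j}E(H_{p}(u)|_{V_{j}})$), and both are converted via the level-uniform equivalence (\ref{En}). Your added remarks on the uniformity of the constants in (\ref{En}) and on applying it only levelwise to avoid circularity are sound refinements of the same argument, not a different approach.
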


\begin{proof}
Note that for $k>n$, by (\ref{En}) and (\ref{HP}), 
\begin{equation}
\mathcal{E}_{k}^{\sigma _{p}^{\#}}(H_{p}(u))\simeq \Lambda
^{k}E(H_{p}(u))=\Lambda ^{k}E(H_{p}(u)\big|_{V_{k}})=\Lambda ^{n}E(u)\simeq 
\mathcal{E}_{n}^{\sigma _{p}^{\#}}(u).  \label{Hp-1}
\end{equation}
Also, by (\ref{AM}) and (\ref{En}), we have for any $1\leq j\leq n$,%
\begin{equation}
\Lambda ^{n}E(u)=\Lambda ^{n}E(H_{p}(u)\big|_{V_{n}})\geq \Lambda
^{j}E(H_{p}(u)\big|_{V_{j}})\simeq \mathcal{E}_{j}^{\sigma
_{p}^{\#}}(H_{p}(u)).  \label{Hp-2}
\end{equation}

Combining (\ref{Hp-1}) and (\ref{Hp-2}), 
\begin{equation}
\mathcal{E}_{p,\infty }^{\sigma _{p}^{\#}}(H_{p}(u))=\sup_{k\geq 1}\mathcal{E%
}_{k}^{\sigma _{p}^{\#}}(H_{p}(u))\leq C\mathcal{E}_{n}^{\sigma
_{p}^{\#}}(u).  \label{En-2}
\end{equation}
which completes the proof.
\end{proof}

Next, we prove that $H_{p}(u)$ (defined as in (\ref{HP})) has a unique H\"{o}%
lder continuous extension to $K$, and we still denote it by $H_{p}(u)$.

\begin{lemma}
\label{Hp} Let $K$ be a nested fractal and $n\in \mathbb{N}^{+}$. Then there
exist two positive constants $c,C$ such that, for all $u\in l(V_{n})$ and $%
a,b\in V_{\ast }$ with $d(a,b)<c\rho$, 
\begin{equation}
|H_{p}(u)(a)-H_{p}(u)(b)|^{p}\leq Cd(a,b)^{p\sigma _{p}^{\#}-\alpha } 
\mathcal{E}_{n}^{\sigma _{p}^{\#}}(u),  \label{HHP}
\end{equation}
where $H_{p}(u)$ is defined as in (\ref{HP}).
\end{lemma}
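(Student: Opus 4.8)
The plan is to deduce the pointwise Hölder bound from the global energy control already established in Lemma \ref{Hp2}, namely $\mathcal{E}_{p,\infty }^{\sigma _{p}^{\#}}(H_{p}(u))\le C\mathcal{E}_{n}^{\sigma _{p}^{\#}}(u)$, by a chaining argument across scales. Write $f:=H_{p}(u)$ and $\mathcal{E}:=\mathcal{E}_{n}^{\sigma _{p}^{\#}}(u)$. The single building block is a \emph{one-cell oscillation estimate}: if $x,y$ are vertices of a common $k$-cell $K_{w}$, i.e. $x,y\in V_{w}$, then $|f(x)-f(y)|^{p}\le E_{k}^{(p)}(f)$, and since $E_{k}^{(p)}(f)=\rho ^{k(p\sigma _{p}^{\#}-\alpha )}\mathcal{E}_{k}^{\sigma _{p}^{\#}}(f)$, Lemma \ref{Hp2} yields $|f(x)-f(y)|^{p}\le C\rho ^{k(p\sigma _{p}^{\#}-\alpha )}\mathcal{E}$. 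Crucially $p\sigma _{p}^{\#}-\alpha >0$ by Lemma \ref{yyqx}, so the scale factors $\rho ^{k(p\sigma _{p}^{\#}-\alpha )/p}$ decay geometrically in $k$.

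First I would upgrade this to an \emph{intra-cell estimate}: for any $k$-cell $K_{w}$, any corner $q\in V_{w}$, and any vertex $x\in V_{\ast }\cap K_{w}$, one has $|f(x)-f(q)|\le C'\rho ^{k(\sigma _{p}^{\#}-\alpha /p)}\mathcal{E}^{1/p}$. Fix $m\ge k$ with $x\in V_{m}$ and choose a nested sequence $K_{w}=D_{k}\supset D_{k+1}\supset \cdots \supset D_{m}\ni x$, where $D_{j}$ is a $j$-cell and $x$ is a corner of $D_{m}$. Within each $D_{j}$ the $(j+1)$-subcells form a connected adjacency graph of bounded (structure-dependent) diameter, and by the nesting property (\ref{nest}) adjacent subcells meet at a common corner; hence a fixed corner of $D_{j+1}$ is joined to a fixed corner $q_{j}$ of $D_{j}$ by a chain of at most $L'$ many $(j+1)$-cells, with $L'$ depending only on $K$. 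Applying the one-cell estimate along this chain and telescoping over $j=k,\dots ,m$ gives $|f(x)-f(q_{k})|\le C''\sum_{j\ge k}\rho ^{j(\sigma _{p}^{\#}-\alpha /p)}\mathcal{E}^{1/p}$, and summing the convergent geometric series produces the claimed bound (passing from $q_{k}$ to $q$ by one more one-cell step).

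Finally I would assemble the global estimate. Given $a,b\in V_{\ast }$ with $d(a,b)<c\rho $, choose the largest $k$ with $d(a,b)<\rho ^{k}$, so that $\rho ^{k+1}\le d(a,b)<\rho ^{k}$ and the smallness $d(a,b)<c\rho $ forces $k\ge 1$. By $\alpha $-regularity (\ref{alpha_r}) the number of $k$-cells meeting $B(a,\rho ^{k})$ is bounded uniformly in $k$, and by connectivity these cells are connected, so $a$ and $b$ are joined by a chain of at most $L$ many $k$-cells $K_{w_{0}},\dots ,K_{w_{L}}$ with $a\in K_{w_{0}}$, $b\in K_{w_{L}}$, consecutive members sharing a corner by (\ref{nest}), and $L$ depending only on $K$. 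Using the intra-cell estimate to pass from $a$ and from $b$ to corners of the end cells, and the one-cell estimate to step between the shared corners, I obtain $|f(a)-f(b)|\le C_{3}\rho ^{k(\sigma _{p}^{\#}-\alpha /p)}\mathcal{E}^{1/p}$. Since $\rho ^{k}\le \rho ^{-1}d(a,b)$, this becomes $|f(a)-f(b)|\le C_{4}d(a,b)^{\sigma _{p}^{\#}-\alpha /p}\mathcal{E}^{1/p}$, and raising to the $p$-th power gives exactly (\ref{HHP}).

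The hard part will be the \emph{geometric chaining input}: establishing, uniformly in the scale $k$, that nearby vertices are joined by a bounded-length chain of $k$-cells meeting at vertices, and that inside a cell any interior vertex reaches a corner through boundedly many subcells. These uniform combinatorial bounds are exactly where the connectivity, symmetry and nesting axioms of a nested fractal, together with the $\alpha $-regularity (\ref{alpha_r}), enter; once they are secured, the analytic content is merely the geometric summation made convergent by $\sigma _{p}^{\#}>\alpha /p$.
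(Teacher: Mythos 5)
Your architecture is the same as the paper's proof: a one-cell oscillation estimate extracted from Lemma \ref{Hp2} (the paper's Case 1), a corner-to-interior-vertex estimate obtained by chaining down scales and summing the geometric series $\sum_{j}\rho ^{j(\sigma _{p}^{\#}-\alpha /p)}$, convergent precisely because $\sigma _{p}^{\#}>\alpha /p$ (the paper's Case 2), and a final assembly at the scale $k$ fixed by $\rho ^{k+1}\leq d(a,b)<\rho ^{k}$ (the paper's Case 3). Your one-cell and intra-cell steps are correct; indeed the bounded chain of $(j+1)$-subcells inside each $D_{j}$, justified by connectivity and the nesting property (\ref{nest}), is if anything more careful than the paper's Case 2, which implicitly assumes each cell in the decreasing sequence contains a corner of its parent.

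The genuine gap is in the final assembly, exactly the point you yourself flag as ``the hard part.'' You need: if $d(a,b)<c\rho ^{k}$ then $a,b$ are joined by a chain of \emph{uniformly boundedly many} $k$-cells, consecutive ones sharing a vertex. Your proposed justification --- $\alpha $-regularity (\ref{alpha_r}) bounds the number of $k$-cells meeting $B(a,\rho ^{k})$, ``and by connectivity these cells are connected'' --- does not follow. Connectivity of $K$ (axiom (ii), iterated) only yields \emph{some} chain of $k$-cells between the cell of $a$ and the cell of $b$, with no control on its length and no reason for it to stay inside $B(a,\rho ^{k})$: a priori two non-intersecting $k$-cells could lie at Euclidean distance much smaller than $\rho ^{k}$, in which case every connecting chain is long and leaves the ball, while the union of the boundedly many cells meeting the ball is simply disconnected. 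What rules this out is the uniform separation of non-adjacent same-scale cells, i.e.\ \emph{Condition (H)}: there is $c>0$ such that $d(x,y)<c\rho ^{m}$ if and only if $x,y$ lie in the same or in two intersecting $m$-cells. This is a nontrivial geometric theorem for p.c.f.\ sets, not a consequence of $\alpha $-regularity plus connectedness; the paper imports it from \cite[Proposition 2.5]{GuLau.2020.AASFM} (which rests on \cite{Denglau.2008}), and it is also where the constant $c$ in the statement of the lemma comes from. Once Condition (H) is invoked, the two cells share a vertex $\xi $, and two applications of your intra-cell estimate (to the pairs $(a,\xi )$ and $(b,\xi )$) give (\ref{HHP}) exactly as in the paper's Case 3. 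So your proof becomes complete and essentially identical to the paper's once the chaining claim is replaced by an appeal to (or a proof of) Condition (H); as written, that claim is unsupported.
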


\begin{proof}
For $a,b\in V_{\ast }$ with $d(a,b)<c\rho $ (where the constant $c$ will be
determined later), we consider the following three cases:

Case $1$: $a,b\in V_{w}$ for some $|w|=n$, we have 
\begin{eqnarray*}
\mathcal{E}_{p,\infty }^{\sigma _{p}^{\#}}(H_{p}(u)) &=&\sup\limits_{n\geq
0}\rho ^{-n(p\sigma _{p}^{\#}-\alpha )}E_{n}^{(p)}(H_{p}(u)) \\
&=&\sup\limits_{n\geq 0}\rho ^{-n(p\sigma _{p}^{\#}-\alpha )}\sum_{x,y\in
V_{w},|w|=n}|H_{p}(u)(x)-H_{p}(u)(y)|^{p} \\
&\geq &\sup\limits_{n\geq 0}\rho ^{-n(p\sigma _{p}^{\#}-\alpha
)}\sup\limits_{x,y\in V_{w},|w|=n}|H_{p}(u)(x)-H_{p}(u)(y)|^{p} \\
&\geq &C_1\sup\limits_{n\geq 0}d(x,y)^{-(p\sigma _{p}^{\#}-\alpha
)}\sup\limits_{x,y\in V_{w},|w|=n}|H_{p}(u)(x)-H_{p}(u)(y)|^{p},
\end{eqnarray*}%
which implies%
\begin{equation}
\sup\limits_{x,y\in V_{w},|w|=n}|H_{p}(u)(x)-H_{p}(u)(y)|^{p}\leq
C_1^{-1}d(x,y)^{p\sigma _{p}^{\#}-\alpha }\mathcal{E}_{p,\infty }^{\sigma
_{p}^{\#}}(H_{p}(u))\leq C_2 d(x,y)^{p\sigma _{p}^{\#}-\alpha }\mathcal{E}%
_{n}^{\sigma _{p}^{\#}}(u),  \label{Hp-3}
\end{equation}%
where we have used (\ref{En-1}) in the last inequality. Then (\ref{HHP})
holds by (\ref{Hp-3}).

Case $2$: There exists $|w|=n$ such that $b\in V_{w}$ for some $n\in \mathbb{%
N}^{+}$ and $a\in K_{w}\cap V_{m}$ for some $m\in \mathbb{N}^{+}$ ($n<m$).
Without loss of generality, we assume that the choice of $m$ is minimal and $%
d(a,b)\geq \rho ^{n+1}$ in this case, otherwise $a,b$ belong to a same $(n+1)
$-cell and we can choose $|w^{\prime }|=n+1$ such that $b\in V_{w^{\prime }}$
and $a\in K_{w^{\prime }}\cap V_{m}$. We pick a decreasing sequence of cells 
$\{K_{w_{k}}\}_{k=n}^{m}$ such that $|w_{k}|=k$ with $a\in K_{w_{m}}\cap
V_{m}$, $b\in K_{w_{n}}\cap V_{n}$. Then we obtain a sequence of vertices $%
\{b=x_{n},x_{n+1},\cdots ,x_{m}=a\}$, where $x_{k}\in K_{w_{k}}\cap V_{k}$
for $k=n,\cdots ,m$ and $x_{k+1}\in V_{w_{k}}$ for $k=n,\cdots ,m-1$. By
using the Case $1$, 
\begin{eqnarray*}
|H_{p}(u)(a)-H_{p}(u)(b)| &\leq
&\sum_{k=n}^{m-1}|H_{p}(u)(x_{k})-H_{p}(u)(x_{k+1})| \\
&\leq &C_{2}^{1/p}\sum_{k=n}^{m-1}d(x_{k},x_{k+1})^{\sigma _{p}^{\#}-\alpha
/p}\left( \mathcal{E}_{p,\infty }^{\sigma _{p}^{\#}}(H_{p}(u))\right) ^{1/p}
\\
&\leq &C_{2}^{1/p}\left( \mathcal{E}_{p,\infty }^{\sigma
_{p}^{\#}}(H_{p}(u))\right) ^{1/p}\sum_{k=n}^{m-1}\rho ^{k(\sigma
_{p}^{\#}-\alpha /p)} \\
&\leq &C_{3}\rho ^{n(\sigma _{p}^{\#}-\alpha /p)}\left( \mathcal{E}%
_{p,\infty }^{\sigma _{p}^{\#}}(H_{p}(u))\right) ^{1/p} \\
&\leq &C_{3}\rho ^{-(\sigma _{p}^{\#}-\alpha /p)}d(a,b)^{\sigma
_{p}^{\#}-\alpha /p}\left( \mathcal{E}_{p,\infty }^{\sigma
_{p}^{\#}}(H_{p}(u))\right) ^{1/p} \\
&\leq &C_{4}d(a,b)^{\sigma _{p}^{\#}-\alpha /p}\left( \mathcal{E}%
_{n}^{\sigma _{p}^{\#}}(u)\right) ^{1/p}\text{ \ (by (\ref{En-1}))},
\end{eqnarray*}%
where $C_{4}=C_{3}C^{1/p}\rho ^{-(\sigma _{p}^{\#}-\alpha /p)}$. Therefore,
we prove (\ref{HHP}) in this case.

Case $3$: We cannot find a cell that contains $a,b$ with one of which being
a vertex of this cell. By \cite[Proposition 2.5]{GuLau.2020.AASFM}, \emph{%
Condition (H)} holds for $K$. That is, there exists $c>0$ depending only on $%
K$ such that, $d(x,y)<c\rho ^{m}$ ($m\geq 1$) if and only if $x$ and $y$ lie
in the same or neighboring $m$-cells. Assume that 
\begin{equation*}
c\rho ^{m+1}\leq d(a,b)<c\rho ^{m},
\end{equation*}
then by condition (H), $a$ and $b$ lie in the same or neighboring $m$-cells
since $d(a,b)<c\rho ^{m}$. Therefore, we can find $\xi \in V_{w^{(1)}}\cap
V_{w^{(2)}}$ with $|w^{(1)}|=|w^{(2)}|=m$ ($w^{(1)}=w^{(2)}$ when $a$ and $b$
lie in the same $m$-cell) such that $a,\xi \in K_{w^{(1)}}$ and $b,\xi \in
K_{w^{(2)}}$. Therefore, two pairs $(a,\xi )$, $(b,\xi )$ satisfying the
assumption in Case $2$ and%
\begin{equation*}
\max \{d(a,\xi ),d(b,\xi )\}\leq \rho ^{m}\leq c^{-1}\rho ^{-1}d(a,b).
\end{equation*}

Then by the argument in Case $2$, 
\begin{eqnarray*}
|H_{p}(u)(a)-H_{p}(u)(b)| &\leq &|H_{p}(u)(a)-H_{p}(u)(\xi )|+|H_{p}(u)(\xi
)-H_{p}(u)(b)| \\
&\leq &C_4\left( d(a,\xi )^{\sigma _{p}^{\#}-\alpha /p}+d(\xi ,b)^{\sigma
_{p}^{\#}-\alpha /p}\right) \left( \mathcal{E}_{p,\infty }^{\sigma
_{p}^{\#}}(H_{p}(u))\right) ^{1/p} \\
&\leq &C_5 d(a,b)^{\sigma _{p}^{\#}-\alpha /p}\left( \mathcal{E}_{n}^{\sigma
_{p}^{\#}}(u)\right) ^{1/p}\text{ \ (by (\ref{En-1}))},
\end{eqnarray*}%
where $C_5=2C_4c^{-1}\rho ^{-1}$. Thus showing (\ref{HHP}), the proof is complete.
\end{proof}

\subsection{The proof of (NE)}

We are now in a position to verify (NE) for nested fractals.

\begin{proof}[Proof of Theorem \protect\ref{thm1}]
Let $K$ be a nested fractal and $u\in KS_{p,\infty }^{\sigma _{p}^{\#}}$. We
consider the average value of $u$ on small cells. For any integer $n>\frac{%
\ln(c)}{\ln(\rho)}+1$ where $c$ is from Lemma \ref{Hp}, denote $\hat{u}%
_{n}:V_{n}\rightarrow \mathbb{R}$ by 
\begin{equation}
\hat{u}_{n}(\xi )=\frac{1}{\mu (K_{n+1}^{\ast }(\xi ))}\int_{K_{n+1}^{\ast
}(\xi )}u(x)d\mu (x),  \label{f_n}
\end{equation}%
where $\xi \in V_{n}$ and $K_{m}^{\ast }(\xi )$ stands for the union of all $%
m$-cells containing $\xi$ for positive integer $m$. By the geometry of
nested fractals, for any $x\in K_{n}^{\ast }(\xi )$, we have $K_{n+1}^{\ast
}(\xi )\subset B(x,3\rho ^{n})$.

Let $H_{p}(\hat{u}_{n})$ be the auxiliary extension of $\hat{u}_{n}$
constructed in (\ref{HP}) and we choose it is H\"{o}lder continuous
extension to $K$ (still denote it by $H_{p}(\hat{u}_{n})$). Then by \cite[%
the proof of Lemma 2.4]{GaoYuZhang2022PA} (note that we only need the fact
that $H_{p}(\hat{u}_{n})\in C(K)$ to ensure the weak $*$-convergence of the
measure), 
\begin{align}
\Phi _{H_{p}(\hat{u}_{n})}^{\sigma _{p}^{\#}}(r)& \leq C_{1}\mathcal{E}%
_{p,\infty }^{\sigma _{p}^{\#}}(H_{p}(\hat{u}_{n}))  \notag \\
&\leq C_{2}\mathcal{E}_{n}^{\sigma _{p}^{\#}}(\hat{u}_{n})\text{ \ (by (\ref%
{En-1}))}.  \label{thm1_1}
\end{align}

For any $x,y\in V_{w}$ with some $|w|=n$, by using the H\"{o}lder
inequality, we have 
\begin{align}
|\hat{u}_{n}(x)-\hat{u}_{n}(y)|& \leq \frac{C_{3}}{\mu (K_{n+1}^{\ast
}(x))\mu (K_{n+1}^{\ast }(y))}\int_{K_{n+1}^{\ast }(x)}\int_{K_{n+1}^{\ast
}(y)}|u(z)-u(\xi )|d\mu (\xi )d\mu (z)  \notag \\
& \leq \left( \frac{C_{3}}{\mu (K_{n+1}^{\ast }(x))\mu (K_{n+1}^{\ast }(y))}%
\int_{K_{n+1}^{\ast }(x)}\int_{K_{n+1}^{\ast }(y)}|u(z)-u(\xi )|^{p}d\mu
(\xi )d\mu (z)\right) ^{{1/p}}  \notag \\
& \leq \left( \frac{C_{3}}{\rho ^{2n\alpha }}\int_{K_{n+1}^{\ast
}(x)}\int_{B(z,3\rho ^{n})}|u(z)-u(\xi )|^{p}d\mu (\xi )d\mu (z)\right)
^{1/p},  \label{KSDen-1}
\end{align}%
where we have used the $\alpha $-regularity 
\begin{equation*}
\mu (K_{n+1}^{\ast }(x))\asymp \mu (K_{n+1}^{\ast }(y))\asymp \rho ^{n\alpha
}
\end{equation*}%
and that $K_{n+1}^{\ast }(y)\subset B(z,3\rho ^{n})$ for any $z\in
K_{n+1}^{\ast }(x)$ in the last line, since for any $\tilde{y}\in
K_{n+1}^{\ast }(y)$, 
\begin{equation*}
d(\tilde{y},z)\leq d(\tilde{y},y)+d(x,y)+d(x,z)\leq \rho ^{n+1}+\rho
^{n}+\rho ^{n+1}<3\rho ^{n}.
\end{equation*}

Therefore, we have by (\ref{KSDen-1}) that%
\begin{eqnarray}
\sum_{x,y\in V_{w},|w|=n}|\hat{u}_{n}(x)-\hat{u}_{n}(y)|^{p} &\leq
&\sum_{x,y\in V_{w},|w|=n}\frac{C_{3}}{\rho ^{2n\alpha }}\int_{K_{n+1}^{\ast
}(x)}\int_{B(z,3\rho ^{n})}|u(z)-u(\xi )|^{p}d\mu (\xi )d\mu (z)  \notag \\
&\leq &\frac{C_{4}}{\rho ^{2n\alpha }}\int_{K}\int_{B(z,3\rho
^{n})}|u(z)-u(\xi )|^{p}d\mu (\xi )d\mu (z).  \label{KSDen}
\end{eqnarray}

Thus, for $r\in (0,1)$, we have by \eqref{thm1_1} and (\ref{KSDen}) that 
\begin{align}
\Phi _{H_{p}(\hat{u}_{n})}^{\sigma _{p}^{\#}}(r)\leq & C_{2}\rho
^{-n(p\sigma _{p}^{\#}-\alpha )}\sum_{x,y\in V_{w},|w|=n}|\hat{u}_{n}(x)-%
\hat{u}_{n}(y)|^{p}  \notag \\
\leq & \frac{C_{2}C_4}{\rho ^{n(p\sigma _{p}^{\#}+\alpha )}}%
\int_{K}\int_{B(z,3\rho ^{n})}|u(z)-u(\xi )|^{p}d\mu (\xi )d\mu (z).
\label{thm1_2}
\end{align}

Then we will prove that $H_{p}(\hat{u}_{n})$ converges to $u$ uniformly in $%
K $ when $n\rightarrow \infty $, which will be used to obtain (NE). Note
that for $x\in K_{w}$ ($|w|=n$), 
\begin{equation*}
|H_{p}(\hat{u}_{n})(x)-u(x)|\leq |H_{p}(\hat{u}_{n})(x)-H_{p}(\hat{u}%
_{n})(v)|+|\hat{u}_{n}(v)-u(x)|,
\end{equation*}%
where we pick a vertex $v$ of $K_{w}$ and use the fact that $H_{p}(\hat{u_{n}%
})(v)=\hat{u}_{n}(v)$. By Lemma \ref{Hp}, we have 
\begin{equation}
|H_{p}(\hat{u}_{n})(x)-H_{p}(\hat{u}_{n})(v)|\leq C_{5}\rho ^{n(p\sigma
_{p}^{\#}-\alpha )/p}\left( \mathcal{E}_{n}^{\sigma _{p}^{\#}}(\hat{u}%
_{n})\right) ^{1/p}.  \label{f1}
\end{equation}

We claim that $\mathcal{E}_{p,\infty }^{\sigma _{p}^{\#}}(\hat{u}_{n})$ is
finite. Indeed, we obtain by (\ref{KSDen}) that 
\begin{eqnarray}
\mathcal{E}_{n}^{\sigma _{p}^{\#}}(\hat{u}_{n}) &\leq &\rho ^{-n(p\sigma
_{p}^{\#}-\alpha )}\frac{C_{4}}{\rho ^{2n\alpha }}\int_{K}\int_{B(z,3\rho
^{n})}|u(z)-u(\xi )|^{p}d\mu (\xi )d\mu (z)  \notag \\
&=&C_{4}\rho ^{-n(p\sigma _{p}^{\#}+\alpha )}\int_{K}\int_{B(z,3\rho
^{n})}|u(z)-u(\xi )|^{p}d\mu (\xi )d\mu (z)  \notag \\
&\leq &3^{p\sigma _{p}^{\#}+\alpha }C_{4}[u]_{B_{p,\infty }^{\sigma
_{p}^{\#}}}^{p},  \label{KS}
\end{eqnarray}%
which indicates that $\mathcal{E}_{p,\infty }^{\sigma _{p}^{\#}}(\hat{u}%
_{n})<\infty $. Note that for any $y\in K_{n+1}^{\ast }(v)$, 
\begin{equation*}
d(x,y)<d(x,v)+d(v,y)\leq \rho ^{n}+\rho ^{n+1}<2\rho ^{n},
\end{equation*}%
by \cite[Lemma 2.1]{GaoYuZhang2022PA},%
\begin{equation*}
|u(x)-u(y)|\leq C_6 d(x,y)^{(p\sigma _{p}^{\#}-\alpha )/p}[u]_{B_{p,\infty
}^{\sigma _{p}^{\#}}}\leq C_{7}\rho ^{n(p\sigma _{p}^{\#}-\alpha
)/p}[u]_{B_{p,\infty }^{\sigma _{p}^{\#}}}
\end{equation*}
since $u\in KS_{p,\infty }^{\sigma _{p}^{\#}}=$ $B_{p,\infty }^{\sigma
_{p}^{\#}}$. Therefore,%
\begin{align}
|\hat{u}_{n}(v)-u(x)|& =\left\vert \frac{1}{\mu (K_{n+1}^{\ast }(v))}%
\int_{K_{n+1}^{\ast }(v)}(u(x)-u(y))d\mu (y)\right\vert  \notag \\
& \leq \frac{1}{\mu (K_{n+1}^{\ast }(v))}\int_{K_{n+1}^{\ast
}(v)}|u(x)-u(y)|d\mu (y)  \notag \\
& \leq C_{7}\rho ^{n(p\sigma _{p}^{\#}-\alpha )/p}[u]_{B_{p,\infty }^{\sigma
_{p}^{\#}}}.  \label{f2}
\end{align}

Thus, we see from \eqref{f1} and \eqref{f2}, for any $x\in K_{w}$ with $%
|w|=n $, 
\begin{equation*}
|H_{p}(\hat{u}_{n})(x)-u(x)|\leq C_{8}\rho ^{n(p\sigma _{p}^{\#}-\alpha
)/p}\left( \left( \mathcal{E}_{n}^{\sigma _{p}^{\#}}(\hat{u}_{n})\right)
^{1/p}+[u]_{B_{p,\infty }^{\sigma _{p}^{\#}}}\right) ,
\end{equation*}%
which implies that $H_{p}(\hat{u}_{n})$ converge to $u$ uniformly in $K$ as $%
n\rightarrow \infty $.

Finally, we obtain 
\begin{eqnarray*}
\Phi _{u}^{\sigma _{p}^{\#}}(r) &\leq &\liminf_{n\rightarrow \infty }\Phi
_{H_{p}(\hat{u}_{n})}^{\sigma _{p}^{\#}}(r)\text{ (by using Fatou 's Lemma)}
\\
&\leq &C^{\prime }\liminf_{n\rightarrow \infty }\Phi _{u}^{\sigma
_{p}^{\#}}(3\rho ^{n})\text{ (by taking }\liminf_{n\rightarrow \infty }\text{
in }\eqref{thm1_2}\text{)} \\
&\leq &C^{\prime \prime }\liminf_{r\rightarrow 0}\Phi _{u}^{\sigma
_{p}^{\#}}(r)
\end{eqnarray*}%
for $r\in (0,1)$, which implies 
\begin{equation*}
\sup_{r\in (0,1)}\Phi _{u}^{\sigma _{p}^{\#}}(r)\leq C^{\prime \prime
}\liminf_{r\rightarrow 0}\Phi _{u}^{\sigma _{p}^{\#}}(r),
\end{equation*}%
thus showing (NE). The proof is complete.
\end{proof}

\begin{acknowledgement}
The authors were supported by National Natural Science Foundation of China
(11871296).
\end{acknowledgement}

\end{document}